\newtheoremstyle{mytheorem}%
{5pt}%
{3pt}%
{\itshape}%
{1pt}%
{\bf}%
{.}%
{.5em}%
{}%
\newtheoremstyle{myremark}%
{5pt}%
{3pt}%
{\upshape}%
{1pt}%
{\em}%
{.}%
{.5em}%
{}%
\newtheoremstyle{myexample}%
{5pt}%
{3pt}%
{\upshape}%
{1pt}%
{\bf}%
{.}%
{.5em}%
{}%
\theoremstyle{mytheorem}
\newtheorem{theorem}{Theorem}[section]
\newtheorem{lemma}[theorem]{Lemma}
\newtheorem{corollary}[theorem]{Corollary}
\theoremstyle{myremark}
\theoremstyle{myexample}
\numberwithin{equation}{section}
\makeatletter \renewenvironment{proof}[1][\proofname] {\par\pushQED{\qed}\normalfont\topsep6\p@\@plus6\p@\relax\trivlist\item[\hskip\labelsep\itshape #1\@addpunct{.}]\ignorespaces}{\popQED\endtrivlist\@endpefalse} \makeatother
\renewcommand{\phi}{\varphi}
\renewcommand{\theta}{\vartheta}
\DeclareMathOperator{\mul}{mul}
\newcommand{\alg}{\mathscr{A}}
\newcommand{\abs}[1]{\lvert#1\rvert}
\newcommand{\dupN}{\mathbb{N}}
\newcommand{\seq}[1]{(#1_{n})_{n\in\dupN}}
\newcommand{\dupC}{\mathbb{C}}
\newcommand{\pia}{\pi_A}
\newcommand{\pib}{\pi_B}
\newcommand{\piba}{\pi_{B,a}}
\newcommand{\pibs}{\pi_{B,s}}
\newcommand{\ran}{\operatorname{ran}}
\newcommand{\bha}{\mathscr{B}(\hila)}
\newcommand{\M}{\mathfrak{M}}
\newcommand{\hil}{H}
\newcommand{\hila}{H_A}
\newcommand{\hilb}{H_B}
\newcommand{\kil}{K}
\newcommand{\bh}{\mathscr{B}(\hil)}
\DeclarePairedDelimiterX\sip[2]{(}{)}{#1\,\delimsize\vert\,#2}
\DeclarePairedDelimiterX\siptilde[2]{(}{)_{\!_{\widetilde{A}}}}{#1\,\delimsize\vert\,#2}
\DeclarePairedDelimiterX\sipn[2]{(}{)_{\nu}}{#1\,\delimsize\vert\,#2}
\DeclarePairedDelimiterX\sipm[2]{(}{)_{\mu}}{#1\,\delimsize\vert\,#2}
\DeclarePairedDelimiterX\set[2]{\{}{\}}{#1\,\delimsize\vert\,#2}
\DeclarePairedDelimiterX\dual[2]{\langle}{\rangle}{#1,#2}
\DeclarePairedDelimiterX\sipa[2]{(}{)_{\!_A}}{#1\,\delimsize\vert\,#2}
\DeclarePairedDelimiterX\sipb[2]{(}{)_{\!_B}}{#1\,\delimsize\vert\,#2}
\newcommand{\anti}[1]{\bar{#1}^*}
\newcommand{\limn}{\lim\limits_{n\rightarrow\infty}}
\newcommand{\limsupn}{\limsup\limits_{n\rightarrow\infty}}
\begin{document}
\title{Lebesgue decomposition for representable functionals on $^*$-algebras}

\author[Zs. Tarcsay]{Zsigmond Tarcsay}
\address{Zs. Tarcsay, Department of Applied Analysis, E\"otv\"os L. University, P\'azm\'any P\'eter s\'et\'any 1/c., Budapest H-1117, Hungary; }
\email{tarcsay@cs.elte.hu}

\keywords{Representation, GNS construction, positive operators, positive functionals, representable functionals, Lebesgue decomposition, absolute continuity, singularity}
\subjclass[2010]{Primary 46L45, 47A67, Secondary 46K10}

\begin{abstract}
We present a Lebesgue-type decomposition for  a representable functional on a $^*$-algebra into absolutely continuous and singular parts with respect to an other. This generalizes the corresponding results of S. P. Gudder for unital Banach $^*$-algebras. We also prove that the corresponding absolutely continuous parts are absolutely continuous with respect to each other.
\end{abstract}

\maketitle

\section{Introduction}

  S. P. Gudder in \cite{Gudder} presented a Lebesgue-type decomposition theorem for positive functionals on a unital Banach $^*$-algebra $\alg$. In fact, he proved that for given two positive functionals $f,g$ there exist two positive functionals $g_a, g_s$ such that  $g=g_a+g_s$ where $g_a$ is  absolutely continuous with respect to $f$ and $g_s$ is $f$-semi-singular. Here, the concepts of absolute continuity and semi-singularity read as follows: $g$ is called $f$-absolutely continuous ($g\ll f$) if the properties $f(a_n^*a_n)\to0$ and $g((a_n-a_m)^*(a_n-a_m))\to0$ imply $g(a_n^*a_n)\to0$. Furthermore, $g$ is called $f$-semi-singular ($g\perp f$) if there exists a sequence $\seq{a}$ such that $f(a_n^*a_n)\to0$, $g((a_n-a_m)^*(a_n-a_m))\to0$ and $g(a)=\limn g(a_n^*a)$ for any $a\in\alg$. Zs. Sz\H ucs \cite{Szucs2012} proved that the concept of semi-singularity is symmetric in the sense that $g\perp f$ if and only if $f\perp g$. Moreover, $f\perp g$ holds if and only if $h=0$ is the unique positive functional which satisfies $h\leq f$ and $h\leq g$. This latter property is expressed by saying that $f$ and $g$ are mutually singular.

  The main goal of this paper is to establish a Lebesgue decomposition theorem in a more general setting, namely, for representable positive functionals on an arbitrary (not necessarily unital) $^*$-algebra. Being each positive functional of a unital Banach $^*$-algebra representable, this will yield a proper generalization of Gudder's result. A similar generalization was considered by Sz\H ucs \cite{Szucs2013} in developing the Lebesgue decomposition theory of so called representable forms over a complex ($^*$-)algebra.  We will also show that the absolute continuous parts $f_a$ and $g_a$, arising by decomposing $f$ with respect to $g$, and $g$ with respect to $f$, respectively, are absolutely continuous with respect to each other. An analogous statement was proved by T. Titkos \cite{titkos} in the context of nonnegative Hermitian forms.
  
  \section{Preliminaries}

  To begin with we recall briefly the classical Gelfand--Neumark--Segal (GNS) construction which we shall use as a basic tool in our paper. The procedure presented below is  slightly different from what can be find in the literature, see e.g. \cite{lesC*alg, palmer}, or \cite{Sebestyen84}. Why we use this modified version is because  we want to point out the close analogy with the Lebesgue decomposition theory of positive operators in Hilbert spaces, see \cite{tarcsay}. To this aim we introduce first the concept of a positive operator from a vector space into its antidual, cf. \cite{SSZT}. Let $\alg$ be a (not necessarily unital) $^*$-algebra, and denote by $\alg^*$ and $\anti{\alg}$ the algebraic dual and antidual of $\alg$, respectively. Here, the latter one is understood as the vector space of all mappings $\phi:\alg\to \dupC$ satisfying
  \begin{equation*}
    \phi(a+b)=\phi(a)+\phi(b),\qquad \phi(\lambda a)=\overline{\lambda}\phi(a),\qquad a,b\in\alg, \lambda\in\dupC.
  \end{equation*}
  The elements of $\anti{\alg}$ are referred to as antilinear functionals of $\alg$. For $\phi\in\anti{\alg}$ and $a\in\alg$ we shall use the notation
  \begin{equation*}
    \dual{\phi}{a}:=\phi(a).
  \end{equation*}

  In the center of our attention there are those antilinear functionals which derive from a given  positive functional $f$ ($f(a^*a)\geq0, a\in\alg$), defined by the correspondence
  \begin{equation}\label{E:Aa_def}
    \alg\to\dupC, \quad x\mapsto f(x^*a).
  \end{equation}
  The mapping $(a,b)\mapsto f(b^*a)$ defines obviously a semi inner product on $\alg$, hence the Cauchy--Schwarz inequality implies
  \begin{equation}\label{E:CBS}
    \abs{f(b^*a)}^2\leq f(a^*a)f(b^*b), \qquad a,b\in\alg.
  \end{equation}

  We associate now a \emph{positive operator} $A$ with the positive functional $f$: For fixed $a\in\alg$, let $Aa$ denote the functional \eqref{E:Aa_def}. Then $A$ is a linear operator of $\alg$ into $\anti{\alg}$ which is nonnegative definite in the sense that
  \begin{equation*}
    \dual{Aa}{a}=f(a^*a)\geq0,\qquad a\in\alg.
  \end{equation*}
  Observe immediately that $A$ is symmetric:
  \begin{equation*}
    \dual{Aa}{b}=\overline{\dual{Ab}{a}},\qquad a,b\in\alg.
  \end{equation*}

  Hereinafter we make two additional assumptions on $f$: suppose that
  \begin{equation}\label{E:representable}
    \abs{f(a)}^2\leq C\cdot f(a^*a),\qquad a\in\alg,
  \end{equation}
  holds for a nonnegative  constant $C$ and furthermore that
  \begin{equation}\label{E:bounded}
    f(b^*a^*ab)\leq \lambda_a \cdot f(b^*b),\qquad b\in\alg.
  \end{equation}
  holds for any $a\in\alg$ with some $\lambda_a\geq 0$.
  We notice here that \eqref{E:bounded} holds automatically in Banach $^*$-algebras, namely by $\lambda_a=r(a^*a)$, where $r$ stands for the spectral radius.
 As it is well known,  assumptions \eqref{E:representable} and \eqref{E:bounded} express  the representability of the positive functional $f$. That means that there exist a Hilbert space $\hil$, a cyclic vector $\zeta\in\hil$, and a $^*$-representation $\pi$ of $\alg$ in $\bh$ such that
  \begin{equation*}
    f(a)=\sip{\pi(a)\zeta}{\zeta},\qquad a\in\alg.
  \end{equation*}

  Such a triple $(\hil, \pi, \zeta)$ is obtained due to the well known  Gelfand--Neumark--Segal (GNS) construction (see \cite{SSZT}):
  Consider the range space $\ran A$ of the linear operator $A$ in $\anti{\alg}$. This becomes a pre-Hilbert space endowed by the inner product
  \begin{equation}\label{E:sipa}
    \sipa{Aa}{Ab}:=f(b^*a),\qquad a\in\alg.
  \end{equation}
  (Note that the Cauchy--Schwarz inequality \eqref{E:CBS} shows that $\sipa{Aa}{Aa}=0$ implies $Aa=0$ for $a\in\alg$ and hence that $\sipa{\cdot}{\cdot}$ defines an inner product on $\ran A$, indeed.) The completion $\hila$ is then a Hilbert space in which we introduce a densely defined continuous operator $\pia(x)$ for any fixed $x\in\alg$ by letting
  \begin{equation}\label{E:pia}
    \pia(x)(Aa):=A(xa),\qquad a\in\alg.
  \end{equation}
  The continuity of $\pia(x)$ is due to \eqref{E:bounded}:
  \begin{equation*}
    \sipa{A(xa)}{A(xa)}=f(a^*x^*xa)\leq \lambda_x\cdot f(a^*a)=\lambda_x\cdot \sipa{Aa}{Aa}.
  \end{equation*}
  If we continue to write $\pia(x)$ for its unique norm preserving extension then it is easy to verify that $\pia$ is a $^*$-representation of $\alg$ in $\bha$. The cyclic vector of $\pia$ is obtained by considering the linear functional $Aa\mapsto f(a)$ from $\hila$ into $\dupC$ whose continuity is guaranteed by \eqref{E:representable}. The Riesz representation theorem yields then a unique vector $\zeta_A\in\hila$  satisfying
  \begin{equation}\label{E:zeta_A}
    f(a)=\sipa{Aa}{\zeta_A},\qquad a\in\alg.
  \end{equation}
  It is again easy to verify the identity
  \begin{equation}\label{E:piazeta}
    \pia(a)\zeta_A=Aa,\qquad a\in\alg,
  \end{equation}
  whence we infer that
  \begin{equation}\label{E:cyclic}
    f(a)=\sipa{\pia(a)\zeta_A}{\zeta_A},\qquad a\in\alg.
  \end{equation}
  That $\zeta_A$ is a cyclic vector of $\pia$ follows from identity \eqref{E:piazeta}.

\section{Lebesgue decomposition for representable functionals}

Throughout this section we fix two representable positive functionals $f$ and $g$ on the  $^*$-algebra $\alg$. Let $A$ and $B$ stand for the positive operators associated with $f$ and $g$, respectively. The GNS-triplets $(\hila,\pia,\zeta_A)$ and $(\hilb,\pib,\zeta_B)$, induced by $f$ and $g$, respectively, are defined along the construction  of the previous section. Let us recall the notions of absolute continuity and singularity regarding positive functionals (see \cite{Gudder} and \cite{otldoplf}): $g$ is called \emph{absolutely continuous} with respect to $f$ (shortly, $g$ is $f$-absolutely continuous) if
\begin{equation*}
    f(a_n^*a_n)\to0 \quad\textrm{and} \quad g((a_n-a_m)^*(a_n-a_m))\to0\quad \textrm{imply} \quad g(a_n^*a_n)\to0
\end{equation*}
for any sequence $\seq{a}$ of $\alg$. On the other hand, $f$ and $g$ are mutually \emph{singular}   if  the properties $h\leq f$ and $h\leq g$ imply $h=0$ for any representable positive functional $h$.

Our aim in the rest of this section is to establish a Lebesgue decomposition theorem for representable positive functionals. More precisely, we shall show that $g$ splits into a sum $g=g_a+g_s$ where both $g_a$ and $g_s$ are representable positive functionals with $g_a$ $f$-absolutely continuous and $g_s$ $f$-singular.  Such a result was proved by Gudder \cite{Gudder}
for positive functionals on a unital Banach $^*$-algebra and by Sz\H ucs \cite{Szucs2013} in a more general setting, namely for representable forms over a complex algebra.

Our treatment is based on the following observation: If $\seq{a}$ is a sequence from $\alg$ such that
\begin{equation}\label{E:closable_seq}
\sipa{Aa_n}{Aa_n}\to0 \qquad \mbox{and}\qquad \sipb{B(a_n-a_m)}{B(a_n-a_m)}\to 0,
\end{equation}
then $Ba_n\to\zeta$ for some $\zeta\in\hil$. If $g$ is $f$-absolutely continuous then $\zeta$ must be $0$.
We introduce therefore the following  closed linear subspace of $\hilb$ (cf. also \cite{Kosaki, sebestytarcsaytitkos, tarcsay}):
\begin{equation}\label{E:M}
    \M:=\set{\zeta\in\hilb}{\exists \seq{a}\subseteq \alg,\sipa{Aa_n}{Aa_n}\to0, Ba_n\to\zeta~ \mbox{in $\hilb$} }.
\end{equation}
 In fact, $\M$ is nothing but the so called \emph{multivalued part} of the closure of the following linear relation
\begin{equation}\label{E:Bkalap}
    T:=\set{(Aa,Ba)\in\hila\times\hilb}{a\in\alg},
\end{equation}
cf. \cite{Hassi2007} and \cite{tarcsay}. That is to say,
\begin{equation*}
    \M=\mul \overline{T}:=\set{\zeta\in\hilb}{(0,\zeta)\in \overline{T}}.
\end{equation*}
Furthermore, $g$ is $f$-absolutely continuous precisely if $\M=\{0\}$, i.e., when $T$ is (the graph of) a closable operator. We will see below that $\M=\hilb$ holds if and only if $g$ and $f$ are mutually singular. In any other cases, $\M$ is a proper closed $\pi_B$-invariant subspace of $\hilb$ (see Lemma \ref{L:invariant} below).

Let $P$ stand for the orthogonal projection of $\hilb$ onto $\M$, and introduce the functionals $g_a$ and $g_s$ as follows
\begin{equation}\label{E:g_a,g_s}
    g_a(a):=\sipb{\pib(a)(I-P)\zeta_B}{(I-P)\zeta_B},\qquad g_s(a):=\sipb{\pib(a)P\zeta_B}{P\zeta_B},
\end{equation}
for $a\in\alg$.
Our main purpose  is to prove that both $g_a$ and $g_s$ are representable positive functionals such that $g=g_a+g_s$ where $g_a$ is $f$-absolutely continuous and $g_s$ is singular with respect to $f$. Moreover, $g_a$ is maximal in the sense that $h\leq g_a$ holds for each $f$-absolutely continuous representable functional $h$ satisfying $h\leq g$.

\begin{lemma}\label{L:invariant}
    Let $\alg$ be  $^*$-algebra and let $f,g$ be representable functionals of $\alg$. Then $\M$ and $\M^{\perp}$ are both $\pib$-invariant subspaces of $\hilb$, and the following identities hold:
    \begin{enumerate}[\upshape (a)]
      \item $\pib(a)P\zeta_B=P\pib(a)\zeta_B=P(Ba)$, $a\in\alg$,
      \item $\pib(a)(I-P)\zeta_B=(I-P)\pib(a)\zeta_B=(I-P)(Ba)$, $a\in\alg$.
    \end{enumerate}
\end{lemma}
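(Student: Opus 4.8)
The plan is to prove the two invariance statements first and then deduce (a) and (b) from the fact that $\M$ reduces each operator $\pib(a)$.

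First I would check that $\M$ is $\pib$-invariant. Let $\zeta\in\M$ and pick $\seq{a}$ with $Aa_n\to0$ in $\hila$ and $Ba_n\to\zeta$ in $\hilb$ (such a sequence exists by the definition \eqref{E:M} of $\M$, since $\|Aa_n\|^2=\sipa{Aa_n}{Aa_n}$). Fix $x\in\alg$ and consider $(xa_n)_{n\in\dupN}$. By \eqref{E:pia}, $A(xa_n)=\pia(x)(Aa_n)$, and since $\pia(x)$ is bounded this tends to $0$ in $\hila$; likewise $B(xa_n)=\pib(x)(Ba_n)\to\pib(x)\zeta$ by continuity of $\pib(x)$. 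Hence $\pib(x)\zeta\in\M$, i.e. $\pib(x)\M\subseteq\M$.

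Next I would show that $\M^{\perp}$ is $\pib$-invariant; this is the one step that genuinely uses that $\pib$ is a $^*$-representation, not merely the invariance of $\M$. For $\eta\in\M^{\perp}$, $\zeta\in\M$ and $x\in\alg$ one has
\[
  \sipb{\pib(x)\eta}{\zeta}=\sipb{\eta}{\pib(x)^*\zeta}=\sipb{\eta}{\pib(x^*)\zeta}=0,
\]
the last equality holding because $\pib(x^*)\zeta\in\M$ by the previous paragraph. Therefore $\pib(x)\eta\in\M^{\perp}$.

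Finally, since both $\M$ and $\M^{\perp}$ are invariant under every $\pib(a)$, the closed subspace $\M$ reduces each $\pib(a)$; equivalently the orthogonal projection $P$ of $\hilb$ onto $\M$ commutes with $\pib(a)$, and hence so does $I-P$. Combining this with the $B$-analogue of \eqref{E:piazeta}, namely $\pib(a)\zeta_B=Ba$, I get
\[
  \pib(a)P\zeta_B=P\pib(a)\zeta_B=P(Ba),
\]
which is (a); applying $I-P$ in place of $P$ (or subtracting (a) from $\pib(a)\zeta_B=Ba$) gives (b). I do not expect a serious obstacle here. The only point needing care is the $\M^{\perp}$ step: one must use both that $\pib(x)^*=\pib(x^*)$ and that each $\pib(x)$ is an everywhere-defined bounded operator (after the norm-preserving extension in the GNS construction), so that the adjoint manipulation above is legitimate. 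Everything else is a routine sequence argument together with the standard equivalence between ``$\M$ and $\M^{\perp}$ are $\pib$-invariant'' and ``$P$ commutes with every $\pib(a)$.''
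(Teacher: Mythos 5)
Your proposal is correct and follows essentially the same route as the paper: prove $\pib$-invariance of $\M$ by the same sequence argument, get invariance of $\M^{\perp}$ from $\pib(x)^*=\pib(x^*)$, and then obtain (a) and (b) from $\pib(a)\zeta_B=Ba$. The only cosmetic difference is that you invoke the general equivalence ``$\M$ and $\M^\perp$ invariant $\iff$ $P$ commutes with each $\pib(a)$,'' whereas the paper verifies the commutation identity directly on $\zeta_B$ by an inner-product computation; both are fine.
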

\begin{proof}
    In order to prove the $\pib$-invariancy of $\M$ fix $a\in\alg$ and $\zeta\in\M$, and consider a sequence $\seq{a}$ from $\alg$ satisfying
    \begin{equation*}
        \sipa{Aa_n}{Aa_n}\to0\qquad \mbox{and} \qquad Ba_n\to\zeta~ \mbox{in $\hilb$}.
    \end{equation*}
    Then we have
    \begin{equation*}
        A(aa_n)=\pia(a)(Aa_n)\to0 \qquad \mbox{and} \qquad B(aa_n)=\pib(a)(Ba_n)\to \pib(a)\zeta~ \mbox{in $\hilb$},
    \end{equation*}
    so that $\pi(a)\zeta\in\M$, indeed. Consequently, $\pib(a)\langle\M\rangle\subseteq \M$ for all $a\in\alg$, as claimed. That $\M^{\perp}$ is also $\pib$-invariant follows immediately from the fact that $\pib$ is a $^*$-representation. We are going to prove now (a): for $a\in\alg$ we have $\pib(a)\zeta_B=Ba$ by \eqref{E:piazeta} so it suffices to show the first equality of (a). So fix $\zeta\in\M$; by the $\pib$-invariancy of $\M$ we have that
    \begin{gather*}
        \sipb{P\pib(a)\zeta_B-\pib(a)P\zeta_B}{\zeta}=\sipb{\pib(a)\zeta_B-\pib(a)P\zeta_B}{\zeta}\\
        =\sipb{\pib(a)(I-P)\zeta_B}{\zeta}=0,
    \end{gather*}
    as $\pib(a)(I-P)\zeta_B\in\M^{\perp}$ which yields (a). Assertion (b) is obtained easily from (a).
\end{proof}

As an immediate consequence we have the following
\begin{corollary} Both of the positive functionals $g_a$ and $g_s$ are representable and their sum satisfies
\begin{equation}\label{E:g=ga+gs}
    g=g_a+g_s.
\end{equation}
More precisely, $\piba:=\pib(\cdot)(I-P)$ and $\pibs:=\pib(\cdot)P$ are both $^*$-representations of $\alg$ in the Hilbert spaces $\M^{\perp}$ and $\M$, respectively,  with cyclic vectors $(I-P)\zeta_B$ and $P\zeta_B$, respectively, which satisfy
    \begin{equation}\label{pibapibs}
        g_a(a)=\sipb{\piba(a)(I-P)\zeta_B }{(I-P)\zeta_B},\qquad  g_s(a)=\sipb{\pibs(a)P\zeta_B }{P\zeta_B},
    \end{equation}
for $a\in\alg.$
\end{corollary}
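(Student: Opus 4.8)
The plan is to read everything off Lemma~\ref{L:invariant}. First I would note that, since \emph{both} $\M$ and $\M^{\perp}$ are $\pib$-invariant, the orthogonal projection $P$ of $\hilb$ onto $\M$ lies in the commutant of $\pib(\alg)$: for $\xi\in\M$ one has $P\pib(a)\xi=\pib(a)\xi=\pib(a)P\xi$, while for $\xi\in\M^{\perp}$ both sides vanish. Granting $P=P^*=P^2\in\pib(\alg)'$, a direct check shows that $\pibs(a)=\pib(a)P$, regarded as an operator on $\M$, is multiplicative, $\pibs(a)\pibs(b)=\pib(a)P\pib(b)P=\pib(a)\pib(b)P=\pib(ab)P=\pibs(ab)$, and that $\pibs(a^*)=\pib(a)^*P$ is precisely the adjoint of $\pibs(a)$ taken inside $\mathscr{B}(\M)$; since moreover each $\pib(a)$ is bounded, $\pibs$ is a $^*$-representation of $\alg$ on $\M$. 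The same reasoning with $I-P$ in place of $P$ shows that $\piba$ is a $^*$-representation of $\alg$ on $\M^{\perp}$.

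Next I would establish cyclicity and the decomposition~\eqref{E:g=ga+gs}. By Lemma~\ref{L:invariant}(a), $\pibs(a)P\zeta_B=P(Ba)$ for every $a\in\alg$, so the cyclic subspace generated by $P\zeta_B$ under $\pibs$ equals the closure of $P(\ran B)$; since $\ran B$ is dense in $\hilb$ (it is the cyclic subspace of $\zeta_B$) and $P$ is bounded with $P(\hilb)=\M$, this forces $\overline{P(\ran B)}=\M$, i.e. $P\zeta_B$ is cyclic for $\pibs$. By symmetry $(I-P)\zeta_B$ is cyclic for $\piba$. For the sum, write $\zeta_B=P\zeta_B+(I-P)\zeta_B$ and expand $g(a)=\sipb{\pib(a)\zeta_B}{\zeta_B}$ into four terms; the two mixed terms vanish because $\pib(a)P\zeta_B\in\M$ and $\pib(a)(I-P)\zeta_B\in\M^{\perp}$ by $\pib$-invariance, leaving $g(a)=g_s(a)+g_a(a)$. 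Positivity of $g_a,g_s$ is then clear, e.g. $g_a(a^*a)=\|\pib(a)(I-P)\zeta_B\|^2\geq 0$.

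Finally I would verify representability of $g_a$ (the argument for $g_s$ is identical). Condition~\eqref{E:representable} follows from Cauchy--Schwarz in $\hilb$: $\abs{g_a(a)}^2\leq\|\pib(a)(I-P)\zeta_B\|^2\,\|(I-P)\zeta_B\|^2=g_a(a^*a)\,\|(I-P)\zeta_B\|^2$, so one may take $C=\|(I-P)\zeta_B\|^2$. Condition~\eqref{E:bounded} follows from submultiplicativity of the operator norm: $g_a(b^*a^*ab)=\|\piba(a)\piba(b)(I-P)\zeta_B\|^2\leq\|\piba(a)\|^2\,g_a(b^*b)$, so $\lambda_a=\|\piba(a)\|^2$ works, each $\piba(a)$ being bounded. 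Hence $g_a$ and $g_s$ are representable positive functionals; rewriting the inner products of~\eqref{E:g_a,g_s} inside the invariant subspaces $\M^{\perp}$ and $\M$, where all the relevant vectors live, gives~\eqref{pibapibs}. I do not expect a genuine obstacle here---the result is flagged as an immediate consequence---the only points that need a little care being the commutation $P\in\pib(\alg)'$, which is exactly what the two-sided invariance of Lemma~\ref{L:invariant} delivers, and the density argument underlying cyclicity.
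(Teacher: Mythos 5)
Your proposal is correct and follows exactly the route the paper intends: the paper gives no proof at all, declaring the corollary an immediate consequence of Lemma \ref{L:invariant}, and your verification (the two-sided invariance gives $P\in\pib(\alg)'$, hence $\piba,\pibs$ are $^*$-representations; cyclicity via $P(\ran B)$ dense in $\M$; the mixed terms in the expansion of $g$ vanish by orthogonality; representability read off from the GNS-type form of $g_a$ and $g_s$) is precisely the routine argument being suppressed. No gaps.
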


We are now in position to state and prove the main result of the paper, the Lebesgue decomposition theorem of representable functionals:
\begin{theorem}\label{T:maintheorem}
    Let $\alg$ be  $^*$-algebra, $f,g$ representable functionals on $\alg$. Then
    \begin{equation*}
        g=g_a+g_s
    \end{equation*}
    is according to the Lebesgue decomposition, that is to say, both $g_a$ and $g_s$ are representable functionals such that $g_a$ is absolutely continuous with respect to $f$ and that $g_s$ and $f$  are mutually singular. Furthermore, $g_a$ is  maximal in the following sense: $h\leq g$ and $h\ll f$ imply  $h\leq g_a$ for any representable positive functional $h$.
\end{theorem}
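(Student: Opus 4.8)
The plan is to verify the four assertions in turn: $g=g_a+g_s$, singularity of $g_s$ with respect to $f$, $f$-absolute continuity of $g_a$, and maximality of $g_a$. The decomposition $g=g_a+g_s$ is immediate from the Corollary (it follows from $\|\zeta_B\|^2 = \|P\zeta_B\|^2 + \|(I-P)\zeta_B\|^2$ applied via \eqref{E:g_a,g_s} together with the Pythagorean splitting of $\pib(a)\zeta_B = Ba$, using Lemma \ref{L:invariant}). So the real work is in the remaining three points.

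For $f$-absolute continuity of $g_a$, suppose $\seq{a}$ satisfies $f(a_n^*a_n)\to 0$ and $g_a((a_n-a_m)^*(a_n-a_m))\to 0$. By the Corollary, $g_a((a_n-a_m)^*(a_n-a_m)) = \|(I-P)(B(a_n-a_m))\|^2 = \|(I-P)Ba_n - (I-P)Ba_m\|^2$, so $(I-P)Ba_n$ is Cauchy in $\hilb$, converging to some $\eta \in \M^\perp$. I must show $g_a(a_n^*a_n) = \|(I-P)Ba_n\|^2 \to 0$, i.e. $\eta = 0$. Now $f(a_n^*a_n) = \|Aa_n\|^2 \to 0$; I would like to say $(0,\eta)$ lies in $\overline{T}$, forcing $\eta\in\M$ and hence $\eta\in\M\cap\M^\perp=\{0\}$. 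The subtlety is that $Ba_n$ itself need not converge — only its projection onto $\M^\perp$ does. To handle this I would pass to the relation $T$ and its closure: since $\|Aa_n\|\to 0$, if $Ba_n$ had a convergent subsequence its limit would be in $\M$; in general one decomposes $Ba_n = PBa_n + (I-P)Ba_n$ and argues that the obstruction to $Ba_n$ converging lives entirely in $\M$, so that $(0,\eta)\in\overline{T}$ still holds because $\eta = \lim (I-P)Ba_n$ and $(I-P)\overline{T}$-closure computations give $\eta\in\mul\overline T \cap \M^\perp$. This reduction — that the $\M^\perp$-component of a sequence witnessing closability behaves well — is the main obstacle and is exactly where the identification $\M = \mul\overline T$ earns its keep.

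For singularity of $g_s$ and $f$: let $h$ be a representable positive functional with $h\leq f$ and $h\leq g_s$. From $h\leq g_s$ one gets, via the standard Radon–Nikodym-type comparison in the GNS space $\M$ of $g_s$, a positive contraction $D\in\mathscr B(\M)$ commuting with $\pibs(\alg)$ such that $h(a) = \sipb{\pibs(a)D^{1/2}P\zeta_B}{D^{1/2}P\zeta_B}$; write $\xi := D^{1/2}P\zeta_B\in\M$. Since $\xi\in\M$, choose $\seq{a}$ with $\|Aa_n\|\to 0$ and $Ba_n\to\xi$ in $\hilb$. Then $h(a_n^*a_n) = \|\pibs(a_n)\xi\|^2$; using that $\xi$ is approximated by $P Ba_n$ and $\pibs(a_n)PBa_m = P B(a_n a_m)$ one shows $h(a_n^*a_n)\to\|\xi\|^2 = h(e)$-type limit behaviour, while simultaneously $h(a_n^*a_n)\leq f(a_n^*a_n)=\|Aa_n\|^2\to 0$. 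Combined with the Cauchy–Schwarz inequality \eqref{E:CBS} for $h$, this forces $h=0$. The one point needing care is the Radon–Nikodym representation of $h\leq g_s$; I would invoke the bounded-operator comparison exactly as in the Hilbert-space positive-operator theory cited (\cite{tarcsay}), which applies because $h,g_s$ are representable.

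Finally, maximality: let $h$ be representable with $h\ll f$ and $h\leq g$. From $h\leq g$ get a positive contraction $D$ on $\hilb$ commuting with $\pib(\alg)$ with $h(a)=\sipb{\pib(a)D\zeta_B}{\zeta_B}$ (self-adjoint square-root form as above). The goal is $D\zeta_B \perp \M$ in the appropriate sense, equivalently $h\leq g_a$. I would show $D^{1/2}\zeta_B\in\M^\perp$: if not, its projection onto $\M$ is a nonzero vector, and pulling back a witnessing sequence $\seq{a}$ for a vector in $\M$ (so $\|Aa_n\|\to0$) one produces a sequence along which $f(a_n^*a_n)\to0$ and $h((a_n-a_m)^*(a_n-a_m))\to0$ but $h(a_n^*a_n)\not\to0$, contradicting $h\ll f$ exactly as in the $g_a$-absolute-continuity argument — so the same closability/multivalued-part machinery is reused. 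Once $D^{1/2}\zeta_B\in\M^\perp$, commutation of $D$ with $P$ (both commute with $\pib(\alg)$, and $P$ is the projection onto the $\pib$-invariant $\M$) gives $h(a) = \sipb{\pib(a)D(I-P)\zeta_B}{(I-P)\zeta_B}\leq \sipb{\pib(a)(I-P)\zeta_B}{(I-P)\zeta_B} = g_a(a)$ for $a=b^*b$, i.e. $h\leq g_a$. The recurring hard kernel throughout is the same: translating the Hilbert-space-geometry statement ``vector lies in $\M$ / in $\M^\perp$'' into the functional-analytic statement about sequences $a_n$ in $\alg$, which is where $\M=\mul\overline T$ and the continuity of $\pib$ are used repeatedly.
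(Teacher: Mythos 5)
Your outline identifies the right objects, but the central step of the absolute continuity argument --- showing that the limit $\eta$ of $(I-P)Ba_n$ vanishes --- is exactly where you stop and call it ``the main obstacle,'' and the route you sketch for it does not work as stated. You cannot conclude $(0,\eta)\in\overline{T}$, because $T$ consists of the pairs $(Aa,Ba)$ and the sequence $Ba_n$ itself need not converge: its $\M$-component may fail to be Cauchy, so no closure computation on $\overline T$ itself produces the pair $(0,\eta)$. The missing ingredient is the closability of the \emph{regular part}: the pairs $(Aa_n,(I-P)Ba_n)$ all lie in $(\overline{T})_{\mathrm{reg}}:=\set{(h,(I-Q)k)}{(h,k)\in\overline{T}}$, where $Q$ projects onto $\mul\overline{T}=\M$; this relation equals $\overline{T}\ominus(\{0\}\times\M)$, hence is \emph{closed}, and it is an \emph{operator} because $(0,(I-Q)k)\in(\overline{T})_{\mathrm{reg}}$ forces $k\in\mul\overline{T}$ and so $(I-Q)k=0$. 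Therefore $(0,\eta)\in(\overline{T})_{\mathrm{reg}}$ gives $\eta=0$. This is precisely the theorem of Hassi--Sebesty\'en--de Snoo--Szafraniec \cite{Hassi2007} that the paper invokes (and essentially reproves in Lemma~\ref{L:invertible}); without it your argument is incomplete.

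For singularity and maximality you take a genuinely different route from the paper, via a Radon--Nikodym operator $D$ with $h(b^*a)=\sipb{DBa}{Bb}$. This can be made to work, but it carries its own unproved steps: the existence of $D$ commuting with $\pib(\alg)$, and especially the passage from the form $h(b^*a)$ to the values $h(a)$ themselves (the algebra has no unit, so $h(a)$ is not ``$h(e^*a)$''), which you need in order to run any limit computation of the type $\abs{h(a_n)}^2\le C\,h(a_n^*a_n)$. The paper avoids all of this machinery: representability of $h$ gives $\abs{h(a)}^2\le C\,h(a^*a)\le C\,g(a^*a)$, so the Riesz theorem furnishes a single vector $\zeta_h\in\hilb$ with $h(a)=\sipb{Ba}{\zeta_h}$; one checks $\zeta_h\in\M^{\perp}$ using $h\ll f$, and then maximality follows from $h(a^*a)=\sipb{(I-P)(Ba)}{\pib(a)\zeta_h}$ together with the estimate $\|\pib(a)\zeta_h\|^2\le h(a^*a)$ (proved by a supremum over the unit ball of $\ran B$), while singularity follows from $\|\zeta_h\|^4\le C\limsupn g_s(a_n^*a_n)=C\,\sipb{P\zeta_h}{P\zeta_h}=0$. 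Note also that your maximality sketch aims the contradiction at the wrong statement: $PD^{1/2}\zeta_B\neq0$ does not by itself yield a sequence with $h(a_n^*a_n)\not\to0$ (that would require $D^{1/2}\zeta\neq 0$ for the relevant $\zeta\in\M$); the clean formulation is that $h\ll f$ forces $D^{1/2}$ to vanish on all of $\M$, whence $\ran D\subseteq\M^{\perp}$ and $h(a^*a)=\|D^{1/2}(I-P)(Ba)\|^2\le g_a(a^*a)$.
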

\begin{proof}
    We start by proving that $g_a$ is $f$-absolutely continuous. Consider therefore a sequence $\seq{a}$ such that
    \begin{equation*}
        f(a_n^*a_n)\to0,\qquad \mbox{and} \qquad g_a((a_n-a_m)^*(a_n-a_m))\to0.
    \end{equation*}
    Then, by Lemma \ref{L:invariant} we have
    \begin{equation*}
       \sipa{Aa_n}{Aa_n}\to0,\qquad \sipb{(I-P)(B(a_n-a_m))}{(I-P)(B(a_n-a_m))}\to0.
    \end{equation*}
     Nevertheless, the operator $\hila\supseteq\ran A\to\hilb$, $Ax\mapsto(I-P)(Bx)$ coincides with the so called regular part $T_{\textrm{reg}}$ (see \cite[(4.1)]{Hassi2007}) of the linear relation $T$ of \eqref{E:Bkalap} hence it is closable in virtue of \cite[Theorem 4.1]{Hassi2007}. Consequently,
    \begin{equation*}
        g_a(a_n^*a_n)=\sipb{(I-P)(Ba_n)}{(I-P)(Ba_n)}\to0,
    \end{equation*}
    which proves the absolute continuity part of the statement.

    In the next step we prove the extremal property of $g_a$. Consider a representable functional $h$ on $\alg$ such that $h\leq g$ and that $h$ is $f$-absolutely continuous. Then we have by representability
    \begin{align*}
        \abs{h(a)}^2\leq C\cdot h(a^*a)\leq C\cdot g(a^*a)= C\cdot\sipb{Ba}{Ba},\qquad a\in\alg,
    \end{align*}
    hence the linear functional $Ba\mapsto h(a)$ is continuous on $\ran B$ of $\hilb$. The Riesz representation theorem yields therefore a (unique) representing vector $\zeta_h\in\hilb$ that fulfills
    \begin{equation}\label{E:zetah}
        h(a)=\sipb{Ba}{\zeta_h},\qquad a\in\alg.
    \end{equation}
    We state that $\zeta_h\in\M^{\perp}$. Fix therefore $\zeta\in\M$ and consider a sequence $\seq{a}$ from $\alg$ such that
    \begin{align*}
        \sipa{Aa_n}{Aa_n}\to0 \qquad \mbox{\and}\qquad Ba_n\to\zeta~\mbox{in $\hilb$}.
    \end{align*}
    In particular, $\seq{Ba}$ is Cauchy in $\hilb$, therefore $h((a_n-a_m)^*(a_n-a_m))\to0$ holds by $h\leq g$ and thus $h(a_n^*a_n)\to0$ as $h$ is $f$-absolutely continuous. That implies that
    \begin{align*}
        \abs{\sipb{\zeta}{\zeta_h}}^2&=\limn\abs{\sipb{Ba_n}{\zeta_h}}^2=\limn \abs{h(a_n)}^2\leq C\cdot\limn h(a_n^*a_n)\to0,
    \end{align*}
    which yields the desired identity. Fix now $a\in\alg$; by Lemma \ref{L:invariant} and according to identity $(I-P)\zeta_h=\zeta_h$ we conclude that
    \begin{align*}
        h(a^*a)&=\sipb{B(a^*a)}{\zeta_h}=\sipb{(I-P)(Ba)}{\pib(a)\zeta_h}\\
               &\leq \|(I-P)(Ba)\|_{\!_B} \|\pib(a)\zeta_h\|_{\!_B}=\sqrt{g_a(a^*a)}\|\pib(a)\zeta_h\|_{\!_B},
    \end{align*}
    thus $h\leq g_a$ will be obtained once we prove that
    \begin{equation}\label{E:pibazetah}
        \sipb{\pib(a)\zeta_h}{\pib(a)\zeta_h}\leq h(a^*a),\qquad a\in\alg.
    \end{equation}
    By using the density of $\ran B$ in $\hilb$ it follows that
    \begin{align*}
        \sipb{\pib(a)\zeta_h}{\pib(a)\zeta_h}&=\sup\set[\big]{\abs{\sipb{Bx}{\pib(a)\zeta_h}}^2}{x\in\alg, \sipb{Bx}{Bx}\leq1}\\
                                           &=\sup\set[\big]{\abs{\sipb{B(a^*x)}{\zeta_h}}^2}{x\in\alg, g(x^*x)\leq1}\\
                                           &=\sup\set{\abs{h(a^*x)}^2}{x\in\alg, g(x^*x)\leq1}\\
                                           &\leq \sup\set{h(a^*a)h(x^*x)}{x\in\alg, g(x^*x)\leq1}\\
                                           &\leq h(a^*a),
    \end{align*}
    as it is claimed.

    There is nothing left but to prove that $g_s$ and $f$ are singular with respect to each other. Fix therefore a representable functional $h$ of $\alg$ such that $h\leq f$ and $h\leq g_s$. Then clearly $h\leq g$ and $h$ is $f$-absolutely continuous. By the previous step, there exists $\zeta_h\in\M^{\perp}$ such that \eqref{E:zetah} holds. By density of $\ran B$ we may choose $\seq{a}$ from $\alg$ such that $Ba_n\to\zeta_h$ in $\hilb$. Then we find that
    \begin{align*}
        \abs{\sipb{\zeta_h}{\zeta_h}}^2&=\limn\abs{\sipb{Ba_n}{\zeta_h}}^2=\limn\abs{h(a_n)}^2\leq C\cdot\limsupn h(a_n^*a_n)\\
                                 &\leq  C\cdot\limsupn g_s(a_n^*a_n)=C\cdot\limsupn \sipb{P(Ba_n)}{P(Ba_n)}\\
                                 &= C\cdot \sipb{P\zeta_h}{P\zeta_h}=0,
    \end{align*}
    hence $h=0$. The proof is therefore complete.
\end{proof}

\section{Mutually absolute continuity of the absolute continuous parts}

Let $f,g$ be representable positive functionals on the $^*$-algebra $\alg$ and consider the Lebesgue decompositions
\begin{equation*}
    f=f_a+f_s,\qquad g=g_a+g_s,
\end{equation*}
where $f_a,f_s$ and $g_a,g_s$ are obtained along the procedure presented in the previous section. In accordance with Theorem \ref{T:maintheorem},  $f_a\ll g$ and $g_a\ll f$. Our purpose in this section is to show that the absolute continuous parts $f_a$ and $g_a$ are mutually absolute continuous, that is that $f_a\ll g_a$ and $g_a\ll f_a$ hold true. The heart of the matter is in the following lemma which may be of interest on its own right.
\begin{lemma}\label{L:invertible}
    Let $T$ be a linear relation between two Hilbert spaces $\hil$ and $\kil$. Let $\overline{T}$ stand for the closure of $T$ and let $P,Q$ be the orthogonal projections onto $\ker \overline{T}$ and $\mul \overline{T}$, respectively. Then
    \begin{equation*}
        S_0:=\set{((I-P)h,(I-Q)k)}{(h,k)\in T}
    \end{equation*}
    is (the graph of) a closable linear operator whose closure $\overline{S_0}$ is one-to-one.
\end{lemma}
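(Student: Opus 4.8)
The plan is to reduce all three assertions to one structural observation: although $S_0$ is manufactured from $T$ (and not from its closure) by applying the bounded map $(I-P)\oplus(I-Q)$, it turns out that $S_0$ is already contained in $\overline{T}$, and hence $\overline{S_0}\subseteq\overline{T}$ as well. To establish this, I would fix $(h,k)\in T$ and note that $Ph\in\ker\overline{T}$ gives $(Ph,0)\in\overline{T}$, while $Qk\in\mul\overline{T}$ gives $(0,Qk)\in\overline{T}$; adding these two elements of $\overline{T}$ and subtracting the result from $(h,k)\in T\subseteq\overline{T}$ yields $\bigl((I-P)h,(I-Q)k\bigr)\in\overline{T}$. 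This containment is the step I expect to be the main obstacle---or rather, the only point that requires an idea; everything after it is routine orthogonality bookkeeping, using that $I-P$ and $I-Q$ are the orthogonal projections of $\hil$ and $\kil$ onto the \emph{closed} subspaces $(\ker\overline{T})^{\perp}$ and $(\mul\overline{T})^{\perp}$, so that $\ran(I-P)=(\ker\overline{T})^{\perp}$ and $\ran(I-Q)=(\mul\overline{T})^{\perp}$ are closed.

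With that in hand, I would first verify that $S_0$ is single-valued, i.e. the graph of a linear operator: if $\bigl((I-P)h,(I-Q)k\bigr)\in S_0$ and $(I-P)h=0$, then $h=Ph\in\ker\overline{T}$, hence $(h,0)\in\overline{T}$; subtracting this from $(h,k)\in T\subseteq\overline{T}$ gives $(0,k)\in\overline{T}$, i.e. $k\in\mul\overline{T}$, whence $(I-Q)k=0$. Next, to see that $S_0$ is closable, that is $\mul\overline{S_0}=\{0\}$, I would take $(0,w)\in\overline{S_0}$; by the containment above $(0,w)\in\overline{T}$, so $w\in\mul\overline{T}$, while at the same time $w$ is a limit of vectors of the form $(I-Q)k$ and therefore lies in $\overline{\ran(I-Q)}=(\mul\overline{T})^{\perp}$; hence $w=0$.

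Finally, to prove that $\overline{S_0}$ is one-to-one, suppose $(v,0)\in\overline{S_0}$; then $(v,0)\in\overline{T}$ forces $v\in\ker\overline{T}$, whereas $v$ being a limit of vectors $(I-P)h$ forces $v\in\overline{\ran(I-P)}=(\ker\overline{T})^{\perp}$, so $v=0$ and $\ker\overline{S_0}=\{0\}$. To summarize, the only genuinely nontrivial ingredient is the inclusion $S_0\subseteq\overline{T}$; granting it, single-valuedness of $S_0$, its closability, and the injectivity of $\overline{S_0}$ each follow from a two-line argument pitting $\ran(I-P)$ and $\ran(I-Q)$ against $\ker\overline{T}$ and $\mul\overline{T}$. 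It may also be worth recording, for the application in the next section, that the same computation identifies $\overline{S_0}$ with $\overline{T}\cap\bigl((\ker\overline{T})^{\perp}\times(\mul\overline{T})^{\perp}\bigr)$, i.e. essentially the injective part of the regular part of $T$.
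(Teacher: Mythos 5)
Your proof is correct, but it takes a genuinely different and more elementary route than the paper. The paper invokes the canonical (``regular part'') decomposition of linear relations from Hassi--Sebesty\'en--de Snoo--Szafraniec twice: first it forms $R=(\overline{T})_{\mathrm{reg}}=\overline{T}\ominus(\{0\}\times\mul\overline{T})$ and checks that this is a closed operator with $\ker R=\ker\overline{T}$, and then it identifies $(R^{-1})_{\mathrm{reg}}$ with $S^{-1}$, where $S$ is your $S_0$ built from $\overline{T}$ instead of $T$; this exhibits $S$ as a \emph{closed} injective operator containing $S_0$, which is slightly more than the lemma asserts and keeps the argument aligned with the machinery of \cite{Hassi2007} used elsewhere in the paper. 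You instead isolate the single structural fact $S_0\subseteq\overline{T}$ --- obtained by subtracting $(Ph,0)$ and $(0,Qk)$, both of which lie in the linear subspace $\overline{T}$, from $(h,k)$ --- and then finish by pitting the closed subspaces $\ran(I-P)=(\ker\overline{T})^{\perp}$ and $\ran(I-Q)=(\mul\overline{T})^{\perp}$ against $\ker\overline{T}$ and $\mul\overline{T}$. All three of your deductions (single-valuedness, closability via $\mul\overline{S_0}=\{0\}$, injectivity via $\ker\overline{S_0}=\{0\}$) are sound, and your closing identification $\overline{S_0}=\overline{T}\cap\bigl((\ker\overline{T})^{\perp}\times(\mul\overline{T})^{\perp}\bigr)$ is also correct and recovers the paper's closed operator $S$. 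What your version buys is self-containedness and brevity; what the paper's version buys is an explicit description of $\overline{S_0}$ as a regular part of a regular part, consistent with the framework it cites.
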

\begin{proof}
    We shall show that
    \begin{equation*}
        S:=\set{((I-P)h,(I-Q)k)}{(h,k)\in \overline{T}}
    \end{equation*}
    is an invertible closed operator. As $S_0\subseteq S$, this contains our original assertion. Consider first the so called regular part $(\overline{T})_{\textrm{reg}}$ of $\overline{T}$, which is defined by
    \begin{equation*}
        (\overline{T})_{\textrm{reg}}:=\set{(h,(I-Q)k)}{(h,k)\in \overline{T}},
    \end{equation*}
    cf. \cite{Hassi2007}. Let us denote it by $R$ for the sake of brevity. We claim first that $R$ is a closed linear operator such that $R\subseteq\overline{T}$. The proof of this statement can be found in \cite{Hassi2007}, we include here a short proof however, for the sake of the reader.  It is seen easily that $\{0\}\times\mul\overline{T}\subseteq \overline{T}$, and that $\overline{T}-(\{0\}\times\mul\overline{T})=R$. Consequently, $R\subseteq\overline{T}$ and, as  $\{0\}\times\mul\overline{T}$ and $R$ are orthogonal to each other, we infer that $R=\overline{T}\ominus(\{0\}\times\mul\overline{T})$, hence $R$  is closed. To see that $R$ is an operator assume that $(0,(I-Q)k)\in R$ where $(0,k)\in \overline{T}$. This implies that $k\in\mul \overline{T}$ whence $(I-Q)k=0$, that is, $R$ is an operator. Observe furthermore that $\ker R=\ker \overline{T}$: indeed,  by the definition of $R$ it is clear that $\ker \overline{T}\subseteq\ker R $, and the converse inclusion is due to $R\subseteq \overline{T}$.

    Consider now the relation $(R^{-1})_{\textrm{reg}}$. Then $(R^{-1})_{\textrm{reg}}$ is a a closed linear operator due to the above reasoning, such that $(R^{-1})_{\textrm{reg}}\subseteq R^{-1}$. At the same time,
    \begin{equation*}
        \mul R^{-1}=\ker R=\ker \overline{T},
    \end{equation*}
    whence
    \begin{align*}
        (R^{-1})_{\textrm{reg}}&=\set{(k',(I-P)h')}{(h',k')\in R}\\
        &=\set{((I-Q)k,(I-P)h)}{(h,k)\in \overline{T}}=S^{-1}.
    \end{align*}
    We conclude  therefore that $S^{-1}$ is a closed operator, such that $S^{-1}\subseteq R^{-1}$, or equivalently,  $S\subseteq R$. Hence $S$ is an operator as well.
\end{proof}

\begin{theorem}
    Let $f$ and $g$ be representable positive functionals on the $^*$-algebra $\alg$. Denote by $f_a$ and $g_a$ the $g$-absolutely continuous and the  $f$-absolutely continuous parts of $f$ and $g$, respectively. Then $f_a$ and  $g_a$ are absolutely continuous with respect to each other: $f_a\ll g_a$ and $g_a\ll f_a$.
\end{theorem}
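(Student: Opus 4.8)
The plan is to obtain both absolute continuity relations at once as a consequence of Lemma~\ref{L:invertible}, applied to the linear relation $T=\set{(Aa,Ba)\in\hila\times\hilb}{a\in\alg}$ of \eqref{E:Bkalap}. Write $Q_A$ for the orthogonal projection of $\hila$ onto $\ker\overline{T}$ and $Q_B$ for the orthogonal projection of $\hilb$ onto $\mul\overline{T}=\M$, so that $Q_B$ is the projection $P$ of the previous section. First I would record the symmetry of the construction in $f$ and $g$: the linear relation attached to the pair $(g,f)$ is $T^{-1}=\set{(Ba,Aa)}{a\in\alg}$, one has $\overline{T^{-1}}=(\overline{T})^{-1}$ because exchanging coordinates is a homeomorphism of $\hilb\times\hila$ onto $\hila\times\hilb$, and $\mul\overline{T^{-1}}=\ker\overline{T}$. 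Hence $\ker\overline{T}$ is to the decomposition $f=f_a+f_s$ exactly what $\M$ is to $g=g_a+g_s$; in particular the analogue of Lemma~\ref{L:invariant}, with the roles of $f$ and $g$ interchanged, shows that $\ker\overline{T}$ is $\pia$-invariant and, precisely as in the proof of Theorem~\ref{T:maintheorem}, that
\begin{equation*}
    f_a(a^*a)=\sipa{(I-Q_A)(Aa)}{(I-Q_A)(Aa)}\quad\mbox{and}\quad g_a(a^*a)=\sipb{(I-Q_B)(Ba)}{(I-Q_B)(Ba)}
\end{equation*}
for every $a\in\alg$.

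Then I would apply Lemma~\ref{L:invertible} with $P:=Q_A$ and $Q:=Q_B$: it says that
\begin{equation*}
    S_0=\set{\bigl((I-Q_A)(Aa),(I-Q_B)(Ba)\bigr)}{a\in\alg}
\end{equation*}
is (the graph of) a closable linear operator whose closure $\overline{S_0}$ is one-to-one, and both statements drop out by a short limiting argument. To prove $g_a\ll f_a$, take a sequence $\seq{a}$ from $\alg$ with $f_a(a_n^*a_n)\to0$ and $g_a((a_n-a_m)^*(a_n-a_m))\to0$. Putting $u_n:=(I-Q_A)(Aa_n)$ and $v_n:=(I-Q_B)(Ba_n)$, the two identities above show that $u_n\to0$ in $\hila$, while $\seq{v}$ is Cauchy in $\hilb$ and hence converges, say $v_n\to v$. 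Since $(u_n,v_n)\in S_0\subseteq\overline{S_0}$ and $\overline{S_0}$ is closed, $(0,v)\in\overline{S_0}$; since $\overline{S_0}$ is a linear operator, $v=\overline{S_0}(0)=0$, hence $g_a(a_n^*a_n)=\sipb{v_n}{v_n}\to0$. For the reverse relation $f_a\ll g_a$ one argues symmetrically: from $g_a(a_n^*a_n)\to0$ and $f_a((a_n-a_m)^*(a_n-a_m))\to0$ one gets $v_n\to0$ in $\hilb$ and $u_n\to u$ in $\hila$ for some $u$, so $(u,0)\in\overline{S_0}$, and this time the injectivity of $\overline{S_0}$ forces $u=0$, that is, $f_a(a_n^*a_n)\to0$. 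The two implications thus rest, respectively, on the single-valuedness and on the injectivity of $\overline{S_0}$ --- the two halves of Lemma~\ref{L:invertible}.

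The only point that genuinely needs care is the bookkeeping in the first paragraph: one must check that $\ker\overline{T}$ is really the subspace governing the Lebesgue decomposition of $f$ with respect to $g$, and that Lemma~\ref{L:invariant} together with the derivation in the proof of Theorem~\ref{T:maintheorem} transfers verbatim to $\hila$, so that the two norm identities are legitimate. Once this is in place, Lemma~\ref{L:invertible} carries the whole argument and essentially no computation is left.
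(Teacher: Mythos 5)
Your proposal is correct and follows essentially the same route as the paper: both rest on applying Lemma~\ref{L:invertible} to the relation $T$ of \eqref{E:Bkalap}, identifying $f_a(a^*a)=\|(I-Q_A)Aa\|_A^2$ and $g_a(a^*a)=\|(I-Q_B)Ba\|_B^2$, and then reading off $g_a\ll f_a$ from the single-valuedness of $\overline{S_0}$ and $f_a\ll g_a$ from its injectivity. Your explicit check that $\ker\overline{T}=\mul\overline{T^{-1}}$ governs the decomposition of $f$ with respect to $g$ is a point the paper leaves implicit, and it is handled correctly.
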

\begin{proof}
    Consider the linear relation $T$ of \eqref{E:Bkalap} and let $P,Q$ be the orthogonal projections onto $\ker \overline{T}$ and $\mul \overline{T}$, respectively. By Theorem \ref{T:maintheorem}, the corresponding absolute continuous parts satisfy
    \begin{equation*}
        f_a(a^*a)=\|(I-P)Aa\|_A^2,\qquad g_a(a^*a)=\|(I-Q)Ba\|_B^2,\qquad a\in\alg.
    \end{equation*}
     According to Lemma \ref{L:invertible}, the relation
    \begin{equation*}
        S:=\set{((I-P)Aa,(I-Q)Ba)}{a\in\alg}
    \end{equation*}
    is (the graph of) a closable operator. Hence, if
    \begin{equation*}
        f_a(a_n^*a_n)=\|(I-P)Aa_n\|_A^2\to0,
    \end{equation*}
    and
    \begin{equation*}
         g_a((a_n-a_m)^*(a_n-a_m))=\|(I-Q)B(a_n-a_m)\|_B^2\to0
    \end{equation*}
    hold for some $\seq{a}$ then
    \begin{equation*}
        g_a(a_n^*a_n)=\|(I-Q)Ba_n\|_B^2\to0,
    \end{equation*}
    whence we deduce that $g_a$ is $f_a$-absolutely continuous. That $f_a$ is $g_a$-absolutely continuous follows from the fact that $S$ is one-to-one with closable inverse, according again to Lemma \ref{L:invertible}.
\end{proof}
\bibliographystyle{amsplain}

\end{document}